\title{Relative Thom spectra via operadic Kan extensions}
\author{Jonathan Beardsley}
\theoremstyle{definition} 
\newtheorem{rem}{Remark}
\newtheorem{defi}[rem]{Definition}
\theoremstyle{plain}
\newtheorem*{thm*}{Theorem}
\newtheorem*{cor*}{Corollary}
\newtheorem{thm}[rem]{Theorem}
\newtheorem{lem}[rem]{Lemma}
\newtheorem{prop}[rem]{Proposition}
\newtheorem{cor}[rem]{Corollary}
\newcommand{\ints}{\mathbb{Z}}
\newcommand{\Cat}{q\mathcal{C}at}
\renewcommand{\S}{\mathcal{S}}
\newcommand{\T}{\mathcal{T}}
\newcommand{\sph}{\mathbb{S}}
\newcommand{\Fin}{\mathcal{F}in_\ast}
\newcommand{\E}{\mathbb{E}}
\newcommand{\MString}{M\mathrm{String}}
\newcommand{\BString}{B\mathrm{String}}
\newcommand{\MSpin}{M\mathrm{Spin}}
\newcommand{\BSpin}{B\mathrm{Spin}}
\newcommand{\BO}{B\mathrm{O}}
\newcommand{\MO}{M\mathrm{O}}
\newcommand{\MU}{M\mathrm{U}}
\newcommand{\BU}{B\mathrm{U}}
\newcommand{\MSO}{M\mathrm{SO}}
\newcommand{\BSO}{B\mathrm{SO}}
\newcommand{\MSp}{M\mathrm{Sp}}
\newcommand{\MSU}{M\mathrm{SU}}
\newcommand{\BSU}{B\mathrm{SU}}
\newcommand{\Spin}{\mathrm{Spin}}
\newcommand{\SO}{\mathrm{SO}}
\newcommand{\U}{\mathrm{U}}
\newcommand{\SU}{\mathrm{SU}}
\begin{document}

\maketitle

\begin{abstract}
We show that a large number of Thom spectra, i.e. colimits of morphisms $BG\to BGL_1(\sph)$, can be obtained as iterated Thom spectra, i.e. colimits of morphisms $BG\to BGL_1(Mf)$ for some Thom spectrum $Mf$. This leads to a number of new relative Thom isomorphisms, e.g. $\MU[6,\infty)\wedge_{\MString} MU[6,\infty)\simeq MU[6,\infty)\wedge\sph[B^3\Spin]$. As an example of interest to chromatic homotopy theorists, we also show that Ravenel's $X(n)$ filtration of $\MU$ is a tower of intermediate Thom spectra determined by a natural filtration of $\BU$ by sub-bialagebras. 
\end{abstract}

\section{Introduction}
In this paper we prove several new results about Thom spectra which are $\E_n$-ring spectra. The most immediately accessible results are relative Thom isomorphisms like the following from Section \ref{interexs}

\begin{itemize}
\item $\MSpin\wedge_{\MString}\MSpin\simeq \MSpin\wedge\sph[K(\ints,4)]$
\item $\MSO\wedge_{\MU}\MSO\simeq \MSO\wedge \sph[\Spin]$ 
\item $\MU\wedge_{\MSp}\MU\simeq MU\wedge\sph[SO/U]$
\item $\MU[6,\infty)\wedge_{\MString} MU[6,\infty)\simeq MU[6,\infty)\wedge\sph[B^3\Spin]$
\item $H\ints/2\wedge_{H\ints}H\ints/2\simeq H\ints/2\wedge\sph[S^1]$.
\end{itemize}

 However, in consideration of the fact that there are now a number of different methods for defining such objects, we will take a moment to clarify precisely which models we use for the remainder (though we do not expect that the choice of model is relevant to the veracity of the statements). By the category of spectra, which we denote by $\S$, we will always mean Lurie's symmetric monoidal quasicategory of spectra defined in Section 1.4.3 of \cite{ha}. In general, except when we explicitly state otherwise, we will always be working with quasicategories and all of our constructions will be homotopy invariant. For example, all of our tensor products are derived (as they must be when working internally to a quasicategory), all of our limits and colimits are the quasicategorical analogs of homotopy colimits and limits, and our functors are actually morphisms of simplicial sets. We also make use of Lurie's notion of $\infty$-operads, defined and described in Chapter 2 of \cite{ha}. We will not review the theory of $\infty$-operads here except to say that they are a natural generalization of the notion of multicategories and the categories of operators of \cite{maythom}. We are especially interested in the $\E_n$ $\infty$-operads of Chapter 5 of \cite{ha}, and will refer to them frequently in this paper. These $\infty$-operads capture the same structure as Boardman and Vogt's little $n$-cubes operads, which use embeddings of $n$-dimensional cubes to parameterize multiplicative structure (cf. Chapter 4 of \cite{maygeom}). It is non-trivial to show that these quasicategorical constructions behave identically to their model category theoretic analogs, and that results obtained thereby are compatible with results obtained using model categories. The interested reader is invited to refer to \cite{htt} and \cite{ha} for proofs that these conditions are met. We recognize, of course, that these references are expansive in their own right, so will endeavor to give more specific citations throughout the paper.

Thom spectra, the main objects of investigation here, are classically constructed by considering spaces associated to stable spherical bundles on topological spaces (see e.g. \cite{sullonthom}). However, after work of May and Sigurdsson \cite{maysig} and later work of Ando, Blumberg, Gepner, Hopkins and Rezk \cite{abghr}, it became clear that there was an alternative way to think of Thom spectra: as quotients of ring spectra by group actions. In general, given an $\E_{n}$-ring spectrum $R$, there is an $n$-fold loop space of units, $GL_1(R)$. Thus a morphism of $n$-fold loop spaces $X\to GL_1(R)$ gives an action of $X$ on $R$, and induces a morphism of $(n-1)$-fold loop spaces $BX\to BGL_1(R)$. By working quasicategorically, we can see that there is in fact a functor (thinking of $BX$ and $BGL_1(R)$ as quasicategories) $BGL_1(R)\hookrightarrow LMod_R$ which is fully faithful. Thus we have a morphism of simplicial sets $BX\to LMod_R$ (the quasicategory of left $R$-modules, where $R$ is considered as an $\E_1$-ring spectrum) which is picking out an action of $X$ on $R$ by $R$-module equivalences. This determines a diagram in $LMod_R$ whose colimit, as the thing on which every point of $X$, including the identity, acts in the same way, must be exactly $R/X$.  When $R=\sph$, the sphere spectrum, $BGL_1(\sph)$ is precisely the classifying space of stable spherical fibrations, and taking the colimit of a morphism $BX\to BGL_1(\sph)\hookrightarrow LMod_{\sph}$ produces a spectrum which is equivalent to the one produced classically by taking a sequence of Thom spaces over $BX$ (cf. Proposition 3.23 of \cite{abghr}). Thus, for instance, $\MU$ is just $\sph/U$ and $\MO$ is just $\sph/O$, and so on and so forth. 

Similarly, if we have an action of a Lie group $G$ on a smooth manifold $X$, we can take its homogeneous space $X/G$. If we happen to have an inclusion of a normal subgroup $H\hookrightarrow G$, then we obtain an action of $H$ on $X$ and can also take the homogeneous space $X/H$. It is a classical fact then that $X/H$ admits an action of $G/H$ and moreover that the iterated homogeneous space $(X/H)/(G/H)$ is diffeomorphic to $X/G$ (cf. Section 1.6, Proposition 13 of \cite{bourblie}). It stands to reason then that something similar should be true for actions of $n$-fold loop spaces on a ring spectrum $R$, and that is one of the main theorems of this document (cf. Theorem \ref{mainthm}), if we allow ourselves to replace the condition ``$H$ is a normal subgroup of $G$" with ``there is a fiber sequence of $n$-fold loop spaces $H\to G\to G/H$." Specifically, if we have a $G$-action on an $\E_n$-ring spectrum $R$, then we obtain a $G/H$-action on $R/H$, and a sequence of ring spectra $R\to R/H\to (R/H)/(G/H)\simeq R/G$. In other words, $R/G$ can be produced as the Thom spectrum associated to an action map $G/H\to BGL_1(R/H)$. 

In Section \ref{its} we show that a number of classical Thom spectra over $\sph$ can in fact be constructed as Thom spectra over \emph{other} Thom spectra. This statement is made rigorous by the following theorem, which is Theorem \ref{mainthm} in the body of the paper:  

\begin{thm*}
Suppose $Y\overset{i}\to X\overset{q}\to B$ is a fiber sequence of reduced $\E_n$-monoidal Kan complexes for $n>1$ with $i$ and $q$ both maps of $\E_n$-algebras. Let $f\colon X\to BGL_1(\sph)$ be a morphism of $\E_n$-monoidal Kan complexes for $n>1$. Then there is a a morphism of $\E_{n-1}$-algebras $B\to BGL_1(M(f\circ i))$ whose associated Thom spectrum is equivalent to $Mf$. 
\end{thm*}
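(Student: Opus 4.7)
The plan is to realize $Mf$ as an iterated Thom spectrum by performing an operadic left Kan extension along the fibration $q\colon X\to B$. First, compose $f$ with the fully faithful embedding $BGL_1(\sph)\hookrightarrow LMod_{\sph}$ described in the introduction to obtain an $\E_n$-monoidal diagram $\tilde f\colon X\to LMod_{\sph}$ whose colimit is $Mf$. Since $i\colon Y\to X$ is an $\E_n$-algebra map, the restriction $\tilde f\circ i$ has colimit $M(f\circ i)$, which inherits an $\E_n$-ring structure; in particular $BGL_1(M(f\circ i))$ is an $\E_{n-1}$-monoidal Kan complex.

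Next, left Kan extend $\tilde f$ along $q$ to obtain a functor $\bar f\colon B\to \S$. The pointwise formula computes its value at $b\in B$ as the colimit over the homotopy fiber $q^{-1}(b)$. For the basepoint this fiber is equivalent to $Y$, so $\bar f(\ast)\simeq M(f\circ i)$. Because $B$ is group-like (as a reduced $\E_n$-space with $n>1$), translation by elements of $B$ identifies every fiber of $q$ with $Y$ in a coherent way, so every value of $\bar f$ is abstractly equivalent to $M(f\circ i)$.

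The heart of the argument is to upgrade $\bar f$ to an operadic left Kan extension, which is the entire reason for the technology advertised by the paper's title. The operadic compatibility of $q$, $i$, and $f$ should force this Kan extension to carry the $\E_n$-structure on $\tilde f$ to an $\E_{n-1}$-structure on $\bar f\colon B\to LMod_{M(f\circ i)}$; one level of multiplicativity is absorbed by the delooping that takes the $\E_n$-ring $M(f\circ i)$ to the $\E_{n-1}$-space $BGL_1(M(f\circ i))$. Moreover, each value $\bar f(b)$ is equivalent to $M(f\circ i)$ not merely as an object of $\S$ but as an $M(f\circ i)$-module, since the translations identifying fibers come from the $\E_n$-structure and therefore act by $M(f\circ i)$-linear equivalences. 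Consequently $\bar f$ factors through $BGL_1(M(f\circ i))\hookrightarrow LMod_{M(f\circ i)}$.

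Finally, the universal property of left Kan extension (Fubini for colimits) gives $\operatorname{colim}_B\bar f\simeq \operatorname{colim}_X\tilde f\simeq Mf$, which identifies $Mf$ with the Thom spectrum of the resulting map $B\to BGL_1(M(f\circ i))$. The principal obstacle is exactly the operadic compatibility of the preceding paragraph: verifying that the pointwise Kan extension faithfully transports the $\E_n$-operadic data on $\tilde f$ to an $\E_{n-1}$-operadic structure on $\bar f$ relative to the symmetric monoidal structure on $LMod_{M(f\circ i)}$, and that the target of this functor consists of honestly invertible $M(f\circ i)$-modules rather than modules that happen to be equivalent to $M(f\circ i)$ after forgetting the module structure. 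Overcoming this requires precisely a Kan extension theorem internal to $\infty$-operads, which is what the body of the paper is designed to supply.
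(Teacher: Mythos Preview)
Your proposal is correct and follows essentially the same route as the paper: form the operadic left Kan extension of $f$ along $q$, identify its value on the basepoint as $M(f\circ i)$ via the fiberwise colimit formula, factor the resulting functor through $BGL_1(M(f\circ i))$, and then use transitivity of Kan extensions (``Fubini'') to identify the Thom spectrum over $M(f\circ i)$ with $Mf$. You also correctly flag the operadic compatibility as the nontrivial input, and indeed the paper packages exactly this into its supporting lemmas (existence of the operadic Kan extension, its pointwise computation, and a flatness check ensuring that operadic Kan extensions along $X\to B$ and $B\to\ast$ compose).

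One point where the paper's argument is sharper than yours: for the factorization through $BGL_1(M(f\circ i))$ you invoke translations by elements of $B$ to argue that all values of $\bar f$ are equivalent as $M(f\circ i)$-modules. The paper instead observes that a reduced $\E_n$-monoidal Kan complex $B$ is canonically equivalent, as an $\E_{n-1}$-monoidal quasicategory, to $BGL_1(1_B)\simeq LMod_{1_B}(B)$; the operadic Kan extension, being a monoidal functor sending $1_B$ to $M(f\circ i)$, then induces an $\E_{n-1}$-monoidal functor on module categories $B\simeq BGL_1(1_B)\to BGL_1(M(f\circ i))$. This sidesteps any ad hoc discussion of whether the fiber identifications are $M(f\circ i)$-linear, and it is what actually produces the map as a morphism of $\E_{n-1}$-algebras rather than merely a map of spaces.
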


By constructing $Mf$ as a Thom spectrum over an intermediate Thom spectrum, we get a relative Thom isomorphism:

\begin{cor*}
There is a morphism of $\E_{n-1}$-ring spectra $R\to M(f\circ i)\to Mf$ and a relative Thom isomorphism $Mf\wedge_{M(f\circ i)} Mf\simeq Mf\wedge_R R[B]$ where $R[B]=R\wedge_\sph\Sigma^\infty_+ B$. 
\end{cor*}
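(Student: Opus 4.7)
The plan is to deduce the corollary from the main theorem by combining the identification $Mf\simeq M(g)$, where $g\colon B\to BGL_1(M(f\circ i))$ is the map produced by Theorem \ref{mainthm}, with the general Thom diagonal / Thom isomorphism established for Thom spectra of maps into $BGL_1$ of an arbitrary base ring. The first step is to just record what the main theorem gives us. Applying it produces the map of $\E_{n-1}$-algebras $g\colon B\to BGL_1(M(f\circ i))$ whose Thom spectrum is equivalent to $Mf$, and the unit of the $R$-algebra structure on $M(f\circ i)$ composed with the $M(f\circ i)$-algebra map $M(f\circ i)\to Mg\simeq Mf$ yields the advertised tower $R\to M(f\circ i)\to Mf$ of $\E_{n-1}$-ring spectra.

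The second step is to invoke the general relative Thom isomorphism: for any $\E_n$-ring $A$ with $n\geq 2$ and any $\E_{n-1}$-algebra map $h\colon Z\to BGL_1(A)$, the Thom diagonal induces an $A$-module equivalence
\[ Mh\wedge_A Mh\simeq Mh\wedge_A A[Z]. \]
This is the base-change of the classical Thom isomorphism of Mahowald to the ring $A$; in the language used in the excerpt it follows immediately from the fact that $Mh$ is the colimit of the composite $Z\to BGL_1(A)\hookrightarrow LMod_A$, so smashing over $A$ with $Mh$ trivializes the local system of $A$-lines on $Z$, replacing it by the constant local system and thereby producing $Mh\wedge_A A[Z]$. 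Applied to $A=M(f\circ i)$ and $h=g$, this gives
\[ Mf\wedge_{M(f\circ i)} Mf\simeq Mf\wedge_{M(f\circ i)} M(f\circ i)[B]. \]

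For the final step, we use that forming $(-)[B]=(-)\wedge_\sph\Sigma^\infty_+ B$ is compatible with base change: because $M(f\circ i)$ is an $R$-algebra, $M(f\circ i)[B]\simeq M(f\circ i)\wedge_R R[B]$ as $M(f\circ i)$-bimodules. Substituting this and collapsing $Mf\wedge_{M(f\circ i)} M(f\circ i)\simeq Mf$ yields the desired equivalence
\[ Mf\wedge_{M(f\circ i)} Mf\simeq Mf\wedge_R R[B]. \]

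The main obstacle is really the second step, i.e.\ ensuring that the relative Thom isomorphism holds in the quasicategorical generality we are using and that the identification $Mf\simeq Mg$ of the first step is compatible with it. Both of these, however, are formal consequences of the presentation of Thom spectra as colimits of the functor $BGL_1(A)\hookrightarrow LMod_A$ discussed in the introduction, so no new work beyond citing the appropriate form of the ABGHR Thom isomorphism (applied to the ring $M(f\circ i)$ rather than $\sph$) is required.
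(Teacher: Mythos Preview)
Your proof is correct and follows essentially the same approach as the paper: you use Theorem~\ref{mainthm} to exhibit $Mf$ as the Thom spectrum of $g\colon B\to BGL_1(M(f\circ i))$, then apply the ABG(HR) Thom diagonal/isomorphism over the base ring $M(f\circ i)$ and simplify via base change. The paper's proof is just a terser version of this, citing the relevant result from \cite{abg} directly and describing the equivalence as the composite of the Thom diagonal with the $M(f\circ i)$-algebra multiplication on $Mf$.
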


The proof requires certain technical details and constructions from \cite{ha}, so we  separate the relevant lemmas into their own subsection (\ref{lemmas}) and refer to them as needed. In the final section we give a number of examples of constructions of intermediate Thom spectra which are $\E_n$-rings. The last example we present is a new construction of $\MU$ which bears some resemblance to Lazard's construction of the Lazard ring in \cite{laz}. This construction is unrelated to recent work regarding $MU$ and complex orientations by McKeown \cite{mckeown}. This paper comprises work contained in the author's doctoral thesis. 

Let us fix some notation for the remainder of the paper: the quasicategory of spectra will be denoted by $\S$ and the quasicategory of Kan complexes, sometimes called spaces, will be denoted $\T$; the quasicategory of small quasicategories will be denoted by $\Cat$ (to avoid set-theoretic issues we assume the existence of inaccessible cardinals as necessary, as in 1.2.15 of \cite{htt});  $\mathcal{O}^\otimes$ or $\mathcal{O}$ will always refer to an $\infty$-operad; $\E_n$ will refer to the little $n$-cubes $\infty$-operad, but sometimes when considering the $\E_\infty$-operad in its role as the terminal $\infty$-operad we will denote it by $\Fin$, to indicate that it is equivalent to the nerve of the category of finite pointed sets; for an $\E_n$-ring spectrum $R$, we denote the $\E_{n-1}$-monoidal quasicategory of left $R$-modules over $R$ as an $\E_1$-ring spectrum by $LMod_R$; $BGL_1(R)$ will be the Kan complex defined in \cite{abghr}, i.e. the delooping of the Kan complex of homotopy automorphisms of $R$ in $LMod_R$.

\section{Intermediate Thom Spectra}\label{its}

The following theorem describes our general method for producing intermediate Thom spectra:

\begin{thm}\label{mainthm}
Suppose $Y\overset{i}\to X\overset{q}\to B$ is a fiber sequence of reduced $\E_n$-monoidal Kan complexes for $n>1$ with $i$ and $q$ both maps of $\E_n$-algebras. Let $f\colon X\to BGL_1(R)$ be a morphism of $\E_n$-monoidal Kan complexes for $n>1$. Then there is a a morphism of $\E_{n-1}$-algebras $B\to BGL_1(M(f\circ i))$ whose associated Thom spectrum is equivalent to $Mf$. 
\end{thm}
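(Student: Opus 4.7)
The plan is to realize $Mf$ as an iterated colimit by performing operadic left Kan extension along $q\colon X\to B$ and then taking the colimit over $B$. First, I would compose $f$ with the fully faithful inclusion $BGL_1(R)\hookrightarrow LMod_R$ to obtain a morphism $\widetilde{f}\colon X\to LMod_R$ whose colimit is $Mf$ by the ABGHR description recalled in the introduction. Restricting $\widetilde{f}$ along $i$ gives a morphism $\widetilde{f}\circ i\colon Y\to LMod_R$ with colimit $M(f\circ i)$.

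Next, I would operadically left Kan extend $\widetilde{f}$ along the $\E_n$-algebra morphism $q$, using the theory of operadic Kan extensions in Chapter 3 of \cite{ha}. Because $Y\overset{i}\to X\overset{q}\to B$ is a fiber sequence of Kan complexes, every fiber of $q$ over a point $b\in B$ is equivalent to $Y$, so the pointwise formula for the left Kan extension yields
\[
(q_!\widetilde{f})(b)\;\simeq\;\colim_{X_b}\widetilde{f}|_{X_b}\;\simeq\;\colim_Y \widetilde{f}\circ i\;\simeq\;M(f\circ i).
\]
Thus $g:=q_!\widetilde{f}\colon B\to LMod_R$ sends every object to (an object equivalent to) $M(f\circ i)$ and, since $B$ is a Kan complex, sends every morphism to an equivalence. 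Hence $g$ factors through the full subspace of $LMod_R$ spanned by objects equivalent to $M(f\circ i)$, which, after base-changing to $M(f\circ i)$-modules, is precisely $BGL_1(M(f\circ i))$.

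The operadic enhancement of the Kan extension equips $g$ with the structure of an $\E_{n-1}$-algebra morphism: even though $X$, $B$, and $\widetilde{f}$ carry $\E_n$-structures, the monoidal structure on $LMod_R$ is only $\E_{n-1}$ (since $R$ is $\E_n$), and coherently comparing colimits with tensor products at the $\E_n$-level consumes one dimension of the operadic structure. This precise loss of one level is what the operadic Kan extension machinery in Section~3.2 of \cite{ha} delivers, and I would extract it through the technical lemmas of Section \ref{lemmas}. Finally, composing left adjoints (or equivalently applying a nested colimit theorem) gives
\[
\colim_B g \;\simeq\; \colim_B q_!\widetilde{f}\;\simeq\;\colim_X \widetilde{f}\;\simeq\;Mf,
\]
identifying $Mf$ as the Thom spectrum of the constructed $\E_{n-1}$-algebra map $B\to BGL_1(M(f\circ i))$.

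The main obstacle is the operadic enhancement rather than the underlying pointwise identification: one must verify that the operadic left Kan extension along $q$, regarded as a morphism between suitable $\infty$-operads built from the reduced $\E_n$-monoidal structures, both exists and produces the claimed $\E_{n-1}$-algebra structure on $g$, and that its colimit retains the $\E_{n-1}$-ring structure of $Mf$. I expect this to be the content of the lemmas deferred to Section \ref{lemmas}, relying on the existence criteria and pointwise formulas for operadic left Kan extensions established in \cite{ha}.
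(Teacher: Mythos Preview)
Your proposal is correct and follows essentially the same architecture as the paper's proof: perform the operadic left Kan extension of $\widetilde{f}$ along $q$, identify its value at the basepoint of $B$ as $M(f\circ i)$ via the pointwise fiber formula, factor through $BGL_1(M(f\circ i))$, and then compose Kan extensions to recover $Mf$ as the colimit over $B$. The only substantive difference is in how the factorization through $BGL_1(M(f\circ i))$ is justified: the paper argues via the identification $B\simeq LMod_{1_B}\simeq BGL_1(1_B)$ (so that an $\E_n$-monoidal functor out of $B$ automatically induces an $\E_{n-1}$-monoidal functor on module categories, landing in $BGL_1$ of the image of the unit), whereas you phrase it as landing in the full subgroupoid of $LMod_R$ on $M(f\circ i)$ and then ``base-changing.'' Be careful here: that subgroupoid is $B\mathrm{Aut}_R(M(f\circ i))$, not $BGL_1(M(f\circ i))$, so what is really doing the work is that the $\E_{n-1}$-monoidal structure on $q_!\widetilde f$ lifts its values to $M(f\circ i)$-modules (along the forgetful functor, not extension of scalars), after which connectedness of $B$ gives the factorization through $BGL_1(M(f\circ i))$---which is exactly the content of the paper's Proposition~\ref{factorization}.
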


The following two corollaries follow immediately from Theorem \ref{mainthm}:

\begin{cor}\label{quotiso}
Given the assumptions of Theorem \ref{mainthm}, there is an equivalence of $\E_{n-1}$-$R$-algebras $Mf\simeq M(f\circ i)\wedge_{R[\Omega B]} R$ where $R$ is equipped with the trivial $R[\Omega B]$-module structure. 
\end{cor}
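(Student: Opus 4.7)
The plan is a two-step reduction: first identify $Mf$ as a relative smash product over $M(f\circ i)[\Omega B]$ using the standard bar-construction description of a Thom spectrum, then change rings along $R\to M(f\circ i)$ to rewrite the relative smash product over $R[\Omega B]$.

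By Theorem~\ref{mainthm}, $Mf$ is the Thom spectrum of an $\E_{n-1}$-algebra map $g\colon B\to BGL_1(M(f\circ i))$. The general description of a Thom spectrum as a two-sided bar construction --- realising $Mh$ for $h\colon X\to BGL_1(S)$ as the colimit in $LMod_S$ of the $\Omega X$-action on $S$ determined by $\Omega h$, equivalently as $B(S,S[\Omega X],S)$ --- specialises to
\[
Mf \simeq M(f\circ i)\wedge_{M(f\circ i)[\Omega B]} M(f\circ i),
\]
where on the left factor $M(f\circ i)[\Omega B]$ acts through the twist $\Omega g$ and on the right factor it acts via the augmentation $M(f\circ i)[\Omega B]\to M(f\circ i)$.

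Next I would perform a change-of-rings computation. The $\E_{n-1}$-ring map $R\to M(f\circ i)$ induces an identification $M(f\circ i)[\Omega B]\simeq M(f\circ i)\wedge_R R[\Omega B]$ of $\E_{n-1}$-$R$-algebras, under which the augmentation of $M(f\circ i)[\Omega B]$ is obtained from that of $R[\Omega B]$ by smashing with $M(f\circ i)$ over $R$. Hence the trivial $M(f\circ i)[\Omega B]$-module structure on $M(f\circ i)$ is the extension of scalars of the trivial $R[\Omega B]$-module structure on $R$, i.e.\ $M(f\circ i)\simeq M(f\circ i)[\Omega B]\wedge_{R[\Omega B]} R$. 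Substituting into the previous display and invoking associativity of the relative smash product collapses the $M(f\circ i)[\Omega B]$-pairing and yields
\[
Mf \simeq M(f\circ i)\wedge_{M(f\circ i)[\Omega B]}\bigl(M(f\circ i)[\Omega B]\wedge_{R[\Omega B]} R\bigr)\simeq M(f\circ i)\wedge_{R[\Omega B]} R,
\]
with $M(f\circ i)$ carrying the twisted $\Omega B$-action provided by Theorem~\ref{mainthm} and $R$ carrying the trivial action, as required.

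The main obstacle is not the manipulation itself but the bookkeeping: one must verify that the base-change equivalence $M(f\circ i)[\Omega B]\simeq M(f\circ i)\wedge_R R[\Omega B]$ and the associativity of the relative smash product lift to equivalences of $\E_{n-1}$-$R$-algebras, not merely of underlying $R$-modules. This should reduce to functoriality of the relative tensor product constructions of~\cite{ha} applied inside the $\E_{n-1}$-monoidal $\infty$-category $LMod_R$, so there is nothing substantive beyond what is already provided by Theorem~\ref{mainthm}.
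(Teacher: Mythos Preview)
Your argument is correct, but it takes a longer route than the paper. The paper works entirely over $R$: from the fiber sequence $\Omega B\to Y\to X$ one has $X\simeq \mathrm{Bar}_\bullet(Y,\Omega B,\ast)$ in $\E_n$-spaces over $BGL_1(R)$, and since the Thom spectrum functor $\T_{/BGL_1(R)}\to LMod_R$ is symmetric monoidal and colimit-preserving, applying it directly yields $Mf\simeq \mathrm{Bar}(M(f\circ i),R[\Omega B],R)=M(f\circ i)\wedge_{R[\Omega B]}R$ in one step. In particular the paper's proof does not actually invoke Theorem~\ref{mainthm} at all, despite the corollary's framing. You instead pass through Theorem~\ref{mainthm} to obtain the bar construction over $M(f\circ i)$, getting $Mf\simeq M(f\circ i)\wedge_{M(f\circ i)[\Omega B]}M(f\circ i)$, and then perform a change-of-rings along $R\to M(f\circ i)$ to descend. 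Both approaches use the same underlying ingredient (Thom spectra as two-sided bar constructions), but you apply it at the level of $B$ over $M(f\circ i)$ rather than at the level of $X$ over $R$, which necessitates the extra base-change step. The payoff of your route is that it makes explicit that the $R[\Omega B]$-action on $M(f\circ i)$ is the one furnished by Theorem~\ref{mainthm}; the paper leaves this implicit.
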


\begin{proof}
For a fiber sequence $Y\to X\to B$ of $\E_n$-spaces we have a fiber sequence $\Omega B\to Y\to X$ such that $X$ is equivalent to a bar construction $Bar_\bullet(Y,\Omega B,\ast)$. Thus, since the Thom spectrum functor is symmetric monoidal and preserves colimits (cf. Corollary 8.1 of \cite{abg}, or Lewis' slightly weaker result in \cite{lewis}), the Thom spectrum of $X\to BGL_1(R)$ is equivalent to the bar construction in $\E_n$-$R$-algebras, and such in general only admits the structure of an $\E_{n-1}$-algebra.
\end{proof}

\begin{rem}
Constructing Thom spectra as bar constructions is not a new idea, and should be compared to the bar construction definition of generalized Thom spectra given in Sections 23.4 and 23.5 of \cite{maysig}. 
\end{rem}

\begin{cor}\label{thomiso}
Given the assumptions of Theorem \ref{mainthm}, there is a morphism of $\E_{n-1}$-$R$-algebra spectra $R\to M(f\circ i)\to Mf$ and a relative Thom isomorphism $Mf\wedge_{M(f\circ i)} Mf\simeq Mf\wedge_R R[B]$ where $R[B]=R\wedge_\sph\Sigma^\infty_+ B$. 
\end{cor}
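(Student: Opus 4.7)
The plan is to derive this corollary directly from Theorem \ref{mainthm} together with the internal Thom isomorphism for relative Thom spectra. Recall that for any $\E_k$-ring spectrum $S$ and any $\E_k$-algebra map $\phi\colon Z\to BGL_1(S)$ of reduced objects, the Thom diagonal induces an equivalence
$$M\phi\wedge_S M\phi \simeq M\phi\wedge_S S[Z],$$
where $S[Z]=S\wedge_\sph\Sigma^\infty_+ Z$ (see, e.g., Proposition 3.16 and Corollary 3.18 of \cite{abghr}). This is the only substantive input; the rest is bookkeeping.

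First I would assemble the tower of $\E_{n-1}$-$R$-algebras. Theorem \ref{mainthm} produces an $\E_{n-1}$-algebra map $g\colon B \to BGL_1(M(f\circ i))$ whose Thom spectrum is equivalent to $Mf$; because $g$ is an $\E_{n-1}$-algebra map, $Mf$ is naturally an $\E_{n-1}$-$M(f\circ i)$-algebra and the arrow $M(f\circ i)\to Mf$ is its unit. The morphism $R\to M(f\circ i)$ is the unit of $M(f\circ i)$, which is an $\E_n$-$R$-algebra since $f\circ i$ is an $\E_n$-algebra map of reduced $\E_n$-monoidal Kan complexes (so in particular $\E_{n-1}$). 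Composing yields the desired sequence $R\to M(f\circ i)\to Mf$.

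Second, I would apply the Thom isomorphism above to $g$, with $S=M(f\circ i)$ and $Z=B$, to obtain
$$Mf\wedge_{M(f\circ i)}Mf \simeq Mf\wedge_{M(f\circ i)} M(f\circ i)[B].$$
Expanding $M(f\circ i)[B]\simeq M(f\circ i)\wedge_\sph \Sigma^\infty_+ B$, collapsing the middle $M(f\circ i)$-smash, and reintroducing $R$ via $Mf\wedge_\sph\Sigma^\infty_+ B\simeq Mf\wedge_R(R\wedge_\sph\Sigma^\infty_+ B)$ delivers the desired $Mf\wedge_R R[B]$.

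The main obstacle is confirming that the internal Thom isomorphism applies at the level of $\E_{n-1}$-$M(f\circ i)$-algebras rather than merely as an equivalence of underlying spectra; once one invokes the symmetric monoidality of the Thom spectrum functor (Corollary 8.1 of \cite{abg}) and the fact that the diagonal $B\to B\times B$ is an $\E_{n-1}$-algebra map, this reduces to standard base-change equivalences inside $LMod_{M(f\circ i)}$. All other manipulations are routine.
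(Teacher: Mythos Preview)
Your proposal is correct and follows essentially the same approach as the paper: both arguments invoke Theorem \ref{mainthm} to realize $Mf$ as the Thom spectrum of $g\colon B\to BGL_1(M(f\circ i))$, then apply the Thom diagonal/isomorphism relative to $M(f\circ i)$ and simplify by base change. The only cosmetic difference is that the paper cites Corollary 1.8 of \cite{abg} (rather than \cite{abghr}) to obtain the equivalence at the level of $\E_{n-1}$-$R$-algebras, and writes the composite as $Mf\wedge_{M(f\circ i)} Mf\to Mf\wedge_{M(f\circ i)}Mf\wedge_R R[B]\to Mf\wedge_R R[B]$ using the Thom diagonal followed by the $M(f\circ i)$-algebra multiplication on $Mf$, which is equivalent to your simplification.
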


\begin{proof}
The fact that the equivalence exists and is an equivalence of $\E_{n-1}$-$R$-algebras follows from Corollary 1.8 of \cite{abg}. In particular, we know that the equivalence is given by a morphism $Mf\wedge_{M(f\circ i)} Mf\to Mf\wedge_{M(f\circ i)}Mf\wedge_R R[B]\to Mf\wedge_R R[B]$, where the first map is the Thom diagonal and the second map is the $M(f\circ i)$-algebra structure map of $Mf$.
\end{proof}

We now give a proof of Theorem \ref{mainthm}, though it relies on Lemmas which we defer to Section \ref{lemmas}. It also makes crucial use of the notion of an \emph{operadic left Kan extension}, as described in Section 3.1.2 of \cite{ha}.

\begin{proof}
Note that $M(f\circ i)$ is an $\E_n$-algebra, so $BGL_1(M(f\circ i))$ is an $(n-1)$-fold loop space, so we cannot hope for the desired map to be more structured than this. By Lemmas \ref{kanextexist} and \ref{kanextcomp} the $\E_{n-1}$-monoidal left Kan extension of $X\overset{f}\to BGL_1(\sph)\hookrightarrow \S$ along $q\colon X\to B$ exists and takes the unique 0-simplex of $B$ to the $\E_n$-algebra $M(f\circ i)$. By Proposition \ref{factorization}, this Kan extension factors as a morphism of $\E_{n-1}$-monoidal Kan complexes through $BGL_1(M(f\circ i))$. Taking the Thom spectrum of the induced morphism $B\to BGL_1(M(f\circ i))$ produces $M(f\circ i)/(\Omega B)$ as a Thom spectrum over $M(f\circ i)$. Moreover, taking the colimit of the functor $B\to BGL_1(M(f\circ i))\hookrightarrow LMod_{M(f\circ i)}$ is equivalent to taking the colimit of the underlying spectra, by Corollary 4.2.3.7 of \cite{ha}. However, taking the colimit in spectra is equivalent to forming the left operadic Kan extension of $B\to\S$ along the map $B\to \ast$. By Lemma \ref{flatfibration} and Corollary 3.1.4.2 of \cite{ha} we have that the left operadic Kan extension along $X\to B$ followed by the left operadic Kan extension along $B\to \ast$ is equivalent to the left operadic Kan extension along $X\to \ast$ (i.e. Kan extensions compose). In other words, the $\E_{n-1}$-$M(f\circ i)$-module $M(f\circ i)/(\Omega B)$ has an underlying spectrum equivalent to the colimit of $X\to BGL_1(\sph)$ which is of course $Mf$. Thus the iterated Kan extension which produces $M(f\circ i)=\sph/\Omega Y$ and then quotients it by the action of $\Omega B$ is equivalent to the one-step Kan extension producing $\sph/\Omega X\simeq Mf$ with an ``action" of the trivial $\E_{n-1}$-space. Hence $Mf$ is produced as a Thom spectrum over $M(f\circ i)$. 
\end{proof}

\subsection{The Lemmas}\label{lemmas}

\begin{lem}\label{kanextexist}
Let $X$ be a Kan complex and $f\colon X\to \mathcal{C}$ an $\E_n$-monoidal morphism of quasicategories where $\mathcal{C}$ is a cocomplete quasicategory. Then for any morphism of $\E_n$-monoidal Kan complexes $p\colon X\to B$, the operadic Kan extension of $f$ along $p$ exists.
\end{lem}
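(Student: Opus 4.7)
The plan is to realize this lemma as a direct instance of the existence criterion for operadic left Kan extensions developed in Section 3.1 of \cite{ha}. First I would translate the data into operadic language: an $\E_n$-monoidal Kan complex $Z$ is a coCartesian fibration $Z^\otimes\to\E_n^\otimes$ of $\infty$-operads, and the given $\E_n$-monoidal morphisms $p\colon X\to B$ and $f\colon X\to\mathcal{C}$ lift to maps $p^\otimes\colon X^\otimes\to B^\otimes$ and $f^\otimes\colon X^\otimes\to\mathcal{C}^\otimes$ of $\infty$-operads over $\E_n^\otimes$ preserving coCartesian edges (where $\mathcal{C}^\otimes\to\E_n^\otimes$ encodes the implicit $\E_n$-monoidal structure on $\mathcal{C}$). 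The operadic left Kan extension we seek is then a morphism of $\infty$-operads $\bar f\colon B^\otimes\to\mathcal{C}^\otimes$ over $\E_n^\otimes$ enjoying the universal property of Definition 3.1.2.2 of \cite{ha}.

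Second, I would invoke Theorem 3.1.2.3 of \cite{ha}, which reduces existence of $\bar f$ to a pointwise condition: for every vertex $b$ of $B^\otimes$, the restricted diagram on the active slice $X^\otimes\times_{B^\otimes}(B^\otimes)^{\mathrm{act}}_{/b}$ must admit an operadic colimit in the appropriate fiber of $\mathcal{C}^\otimes$. Because $X$, $B$, and $\E_n^\otimes$ are all small, each such slice is a small simplicial set. Under the standard assumption that the monoidal structure on $\mathcal{C}$ is compatible with small colimits (cf.\ Section 3.1.1 of \cite{ha}), operadic colimits of such diagrams reduce to ordinary colimits in $\mathcal{C}$, and these exist by cocompleteness.

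The main obstacle, and essentially the only content beyond bookkeeping, is exactly this compatibility condition: that the $\E_n$-tensor product on $\mathcal{C}$ preserves small colimits separately in each variable. The lemma as stated only asserts that $\mathcal{C}$ is cocomplete, so to make the argument above rigorous I would either strengthen the hypothesis explicitly or interpret ``cocomplete $\E_n$-monoidal quasicategory'' in that standard sense. Either way, compatibility is automatic in every application made in the paper, where $\mathcal{C}$ is $\S$ or $LMod_R$ for an $\E_n$-ring $R$, so the strengthening is harmless. Granted this, the proof is a direct appeal to Theorem 3.1.2.3 of \cite{ha} after the translations of the first paragraph, with no additional homotopy-theoretic input required.
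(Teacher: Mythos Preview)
Your approach is essentially the same as the paper's: both reduce the lemma to Lurie's existence results for operadic left Kan extensions in Section~3.1 of \cite{ha}, using that $X$ and $B$ are small and $\mathcal{C}$ is cocomplete. The paper cites the packaged Corollary~3.1.3.5 directly rather than unwinding Theorem~3.1.2.3, and your observation that compatibility of the monoidal structure on $\mathcal{C}$ with colimits is implicitly needed (and automatic for $\S$ and $LMod_R$) is correct and worth making explicit.
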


\begin{proof}
Since $X$ and $B$ are Kan complexes, hence essentially small, and $\mathcal{C}$ is cocomplete, the result follows from Corollary 3.1.3.5 of \cite{ha}. 
\end{proof}

\begin{lem}\label{kanextcomp}
Let $Y\overset{i}\to X\overset{q}\to B$ be a fiber sequence of $\E_n$-monoidal Kan complexes. The $\E_{n}$-monoidal left Kan extension of an $\E_n$-monoidal morphism $f\colon X\to BGL_1(\sph)\hookrightarrow \S$ along $q\colon X\to B$ is computed by taking the colimit of the composition $$fib(X\to B)\simeq Y\to X\to BGL_1(\sph)\hookrightarrow \S.$$
\end{lem}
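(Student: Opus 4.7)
The plan is to reduce the operadic Kan extension to the familiar pointwise colimit formula for ordinary left Kan extensions, exploiting the fact that the homotopy fiber of $q$ over the basepoint of $B$ is precisely $Y$. Since the lemma asserts only that the extension is \emph{computed by} the colimit of $f \circ i$, it suffices to identify the value at the unit $\ast \in B$ with this colimit; the $\E_n$-algebra structure on the resulting object then arises automatically from the operadic construction.

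First, I would invoke the pointwise formula for operadic left Kan extensions (cf.\ Proposition 3.1.3.2 and Remark 3.1.3.4 of \cite{ha}): the value of $q_{!}f$ at an object $b \in B$ is computed as an operadic colimit of $f$ restricted to the slice $X \times_B B_{/b}$. Because $\S$ is a presentably symmetric monoidal $\infty$-category and the $\E_n$ $\infty$-operads have contractible spaces of unary operations (the space of rectilinear self-embeddings of a cube is contractible), this operadic colimit on arity one collapses to the ordinary colimit in $\S$ of the restricted diagram.

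Second, I would identify the slice with the homotopy fiber. Since $B$ is a Kan complex, the slice $B_{/\ast}$ admits a terminal object (namely $\mathrm{id}_\ast$) and is therefore contractible; the projection $B_{/\ast} \to B$ consequently models the inclusion of the basepoint up to homotopy equivalence. Hence the pullback $X \times_B B_{/\ast}$ is equivalent to the homotopy fiber of $q$ over $\ast$, which is $Y$ by hypothesis, and under this equivalence the restriction of $f$ to the slice is identified with the composite $f \circ i \colon Y \to \S$. Combining these two steps yields the colimit asserted in the statement.

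The main technical obstacle I anticipate is verifying that the operadic pointwise formula from \cite{ha} genuinely specializes to the ordinary one in our setting, i.e.\ that computing the colimit in $\S$ (rather than some genuine operadic colimit) recovers the underlying object of the $\E_n$-algebra produced by the extension. This should follow from the compatibility of operadic and ordinary colimits whenever the ambient $\infty$-operad has contractible spaces of $1$-ary operations, but it requires careful bookkeeping with Definition 3.1.2.2 of \cite{ha} and its consequences, particularly when translating the operadic slice over $B$ into the ordinary slice of the underlying Kan complex.
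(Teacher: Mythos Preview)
Your proposal is correct and takes essentially the same approach as the paper: both invoke the pointwise formula for operadic left Kan extensions from \cite{ha} (the paper via Definition~3.1.2.2 and the mapping-cylinder correspondence $\mathcal{M}^\otimes$, you via the slice $X\times_B B_{/\ast}$) and then identify the relevant indexing diagram with the fiber $Y$ using that $B$ is a Kan complex. The technical obstacle you honestly flag---reducing the operadic colimit over the active slice to the ordinary colimit over $Y$---is exactly the step the paper dispatches in one sentence by appealing to the mapping-cylinder description, so you are not missing anything the paper supplies.
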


\begin{proof}
Following the notation given in Definition 3.1.2.2 and the construction in Remark 3.1.3.15 of \cite{ha}, we have a correspondence of $\infty$-operads given by $$\mathcal{M}^\otimes\simeq (X^\otimes\times\Delta^1)\coprod_{X^\otimes\times\{1\}} B^\otimes\to\Fin\times\Delta^1.$$ In other words, there is a family of $\infty$-operads indexed by $\Delta^1$ which looks like $X^\otimes$ (the $\infty$-operad associated to $X$ as an $\E_n$-monoidal Kan complex) at one end and $B^\otimes$ at the other end.  Formula $(\ast)$ of Definition 3.1.2.2 of \cite{ha} states that the value of the desired Kan extension at a 0-simplex $\sigma\in B$ is given by the colimit diagram:

$$ ((\mathcal{M}_{act}^\otimes)_{/\sigma}\times_{\mathcal{M}^\otimes}X^\otimes)^{\triangleright}\to(\mathcal{M}^\otimes)^{\triangleright}_{/\sigma}\to \mathcal{M}^\otimes\to\T$$
 where the morphism $(\mathcal{M}^\otimes)^{\triangleright}_{/\sigma}\to \mathcal{M}^\otimes$ takes the cone point to $\sigma$. In other words, the value of the Kan extension at $\sigma$ is computed by taking the colimit over the diagram in $\mathcal{M}^\otimes$ of objects (and active morphisms) living over $\sigma$. As the simplicial set $\mathcal{M}^\otimes$ is nothing more than the mapping cylinder of the morphism of $\E_n$-monoidal Kan complexes $X^\otimes\to B^\otimes$, we have the result. 
\end{proof}

\begin{lem}\label{flatfibration}
There is a $\Delta^2$-family of $\infty$-operads induced by the morphisms of $\E_{n-1}$-monoidal Kan complexes $X\to B$ and $B\to \ast$, denoted $\mathcal{M}^\otimes\to \Delta^2\times\Fin$, and the induced projection $\mathcal{M}^\otimes\to \Delta^2$ is a flat categorical fibration. 
\end{lem}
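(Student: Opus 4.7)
The plan is threefold: construct $\mathcal{M}^\otimes$ as an iterated mapping cylinder, verify that it constitutes a $\Delta^2$-family of $\infty$-operads, and deduce flatness by exhibiting the total projection as a coCartesian fibration.

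First, writing $i^\otimes\colon X^\otimes \to B^\otimes$ and $q^\otimes\colon B^\otimes \to \Fin$ for the morphisms of $\infty$-operads associated to $i$ and $q$ (noting $B \to \ast$ corresponds at the operadic level to the structure map of $B^\otimes$), I would form the iterated pushout
\[
\mathcal{M}^\otimes := (X^\otimes \times \Delta^2) \coprod_{X^\otimes \times \Delta^{\{1,2\}}} (B^\otimes \times \Delta^{\{1,2\}}) \coprod_{B^\otimes \times \{2\}} \Fin,
\]
where the first pushout is formed along $i^\otimes \times \mathrm{id}$ and the second along $q^\otimes$. With the evident projection to $\Delta^2 \times \Fin$, the fibers over the vertices $0,1,2 \in \Delta^2$ are $X^\otimes$, $B^\otimes$, and $\Fin$ respectively; over each edge one recovers the $\Delta^1$-family mapping cylinder built in Lemma \ref{kanextcomp} for the appropriate morphism among $i^\otimes$, $q^\otimes$, and $q^\otimes \circ i^\otimes$.

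Second, I would verify that $\mathcal{M}^\otimes \to \Delta^2 \times \Fin$ is a $\Delta^2$-family of $\infty$-operads in the sense of Definition 2.3.2.10 of \cite{ha}. The vertex fibers are $\infty$-operads by hypothesis, and the required compatibility with inert morphisms across each edge follows from the fact that $i$ and $q$ are morphisms of $\E_n$-monoidal Kan complexes, hence strong monoidal as morphisms of their underlying $\infty$-operads, so inert-cocartesian lifts in $X^\otimes$ are sent to inert-cocartesian lifts in $B^\otimes$ and $\Fin$.

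The principal obstacle is the third step: proving that the projection $\mathcal{M}^\otimes \to \Delta^2$ is a flat categorical fibration. The cleanest approach is to show this projection is itself a coCartesian fibration, from which flatness is automatic by the results of Appendix B.3 of \cite{ha} (coCartesian fibrations are flat categorical fibrations). CoCartesian lifts of the edges $0 \to 1$ and $1 \to 2$ are supplied by $i^\otimes$ and $q^\otimes$ respectively, and they compose correctly to a coCartesian lift of $0 \to 2$ precisely because $q^\otimes \circ i^\otimes$ is well-defined as a morphism of $\infty$-operads. Should a more direct verification be desired, one may instead apply the criterion of Proposition B.3.14 of \cite{ha}, which reduces flatness to showing that $\mathcal{M}^\otimes \times_{\Delta^2} \Lambda^2_1 \hookrightarrow \mathcal{M}^\otimes$ is a categorical equivalence; this inclusion is built by pushout from the standard inner anodyne inclusion $\Lambda^2_1 \hookrightarrow \Delta^2$ (taken levelwise over $X^\otimes$ and $B^\otimes$) and so is a trivial cofibration in the Joyal model structure.
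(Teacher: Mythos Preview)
Your proposal is correct in outline and reaches the same endpoint as the paper---flatness of $\mathcal{M}^\otimes\to\Delta^2$ via the criterion of Example B.3.4 of \cite{ha} (coCartesian lifts imply flatness)---but the \emph{construction} of $\mathcal{M}^\otimes$ differs in a way worth noting. The paper does not build $\mathcal{M}^\otimes$ as an explicit iterated mapping cylinder. Instead it observes that the composable pair $X\to B\to\ast$ is a 2-simplex in the quasicategory of $\E_{n-1}$-monoidal quasicategories, equivalently a functor $\Delta^2\times\E_{n-1}^\otimes\to\Cat$, and then applies the Grothendieck construction (unstraightening) to obtain $\mathcal{M}^\otimes\to\Delta^2\times\E_{n-1}^\otimes$ as a coCartesian fibration directly. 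This buys the paper two things for free: (i) the total space is automatically a quasicategory and the map a categorical fibration, and (ii) coCartesian lifts exist by construction, so the flatness citation is immediate. Your route, by contrast, requires you to \emph{check} that the iterated pushout is an inner fibration over $\Delta^2$ with the required lifts, or else fall back on your inner-anodyne argument via $\Lambda^2_1\hookrightarrow\Delta^2$; both are doable, but neither is quite as automatic. One small correction: the fiber of your construction over the vertex $2$ should be $\ast^\otimes\simeq\E_{n-1}^\otimes$ rather than $\Fin$, since the operad underlying the terminal $\E_{n-1}$-monoidal category is $\E_{n-1}^\otimes$ itself; the structure map $B^\otimes\to\Fin$ is not the map induced by $B\to\ast$ unless $n=\infty$.
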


\begin{proof}
The equivalence of morphisms $(X\to B\to \ast)\simeq (X\to \ast)$ is given by a 2-simplex in the quasicategory of $\E_{n-1}$-monoidal quasicategories, hence by a morphism of simplicial sets in $Hom(\Delta^2,Hom(\E_{n-1}^\otimes,\Cat))\simeq Hom(\Delta^2\times\E_{n-1}^\otimes,\Cat).$ By the quasicategorical Grothendieck construction of \cite{htt}, we obtain a coCartesian fibration of simplicial sets $p\colon\mathcal{M}^\otimes\to \Delta^2\times \E_{n-1}^\otimes$ such that $p^{-1}(0)\simeq X^\otimes$, $p^{-1}(1)\simeq B^\otimes$ and $p^{-1}(2)\simeq \ast^\otimes$, where $X^\otimes$, $B^\otimes$ and $\ast^\otimes$ are the $\infty$-operads witnessing the $\E_{n-1}$-monoidal structure on $X$, $B$ and $\ast$. The projection map induces a family of $\infty$-operads $\mathcal{M}^\otimes\to \Delta^2$. This projection is a flat fibration as it satisfies the requirements of Example B.3.4 of \cite{ha}, i.e. there are coCartesian lifts of every edge in $\Delta^2\simeq\Delta^2\times\ast\subset\Delta^2\times\Fin$. 
\end{proof}

\begin{prop}\label{factorization}
Let $Y\overset{i}\to X\overset{q}\to B$ be a fiber sequence of reduced, connected $\E_n$-monoidal Kan complexes. The left operadic Kan extension of an $\E_n$-morphism $f\colon X\to BGL_1(\sph)\to \S$ along the $\E_n$-morphism $q\colon X\to B$ factors as a morphism of $\E_{n-1}$-monoidal Kan complexes through $BGL_1(M(f\circ i))$.
\end{prop}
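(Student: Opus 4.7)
My plan is to show that the $\E_{n-1}$-monoidal structure on the left operadic Kan extension $\bar{f}\colon B \to \S$ automatically lifts it to an $M(f\circ i)$-module valued functor, and then use the connectedness and Kan-complex properties of $B$ to show the resulting functor lands in $BGL_1(M(f\circ i))$. By Lemma \ref{kanextcomp}, $\bar{f}(\ast) \simeq M(f\circ i)$, where $\ast$ is the unique 0-simplex of $B$ (which exists because $B$ is reduced). Since the Thom spectrum functor is $\E_n$-monoidal (Corollary 8.1 of \cite{abg}) and $f\circ i$ is an $\E_n$-morphism, $M(f\circ i)$ is an $\E_n$-ring spectrum, so $BGL_1(M(f\circ i))$ is a well-defined $\E_{n-1}$-monoidal Kan complex sitting fully faithfully inside $LMod_{M(f\circ i)}$.

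The first step is to lift $\bar{f}$ through the forgetful functor $LMod_{M(f\circ i)} \to \S$. I would invoke the principle that an $\E_{n-1}$-monoidal functor into a symmetric monoidal quasicategory, whose value on the unit is an algebra $R$, factors canonically through $LMod_R$: for any $b \in B$ the unit constraint $\ast \otimes b \simeq b$, after applying $\bar{f}$, provides a map $M(f\circ i) \otimes \bar{f}(b) \to \bar{f}(b)$ endowing $\bar{f}(b)$ with an $M(f\circ i)$-module structure in a coherent way, and these actions assemble into an $\E_{n-1}$-monoidal refinement. Thus $\bar{f}$ lifts to an $\E_{n-1}$-monoidal morphism $B \to LMod_{M(f\circ i)}$.

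The second step is to show that this lift factors through $BGL_1(M(f\circ i)) \hookrightarrow LMod_{M(f\circ i)}$. Because $B$ is connected, every 0-simplex is joined to $\ast$ by a 1-simplex, and applying $\bar{f}$ to such a 1-simplex yields an $M(f\circ i)$-linear equivalence between $M(f\circ i)$ and $\bar{f}(b)$. Because $B$ is a Kan complex, every 1-simplex of $B$ is invertible and is sent to an equivalence in $LMod_{M(f\circ i)}$. Hence $\bar{f}$ takes values in the maximal subgroupoid on the connected component of $M(f\circ i)$, which by definition is $BGL_1(M(f\circ i))$. This component is closed under the tensor product on $LMod_{M(f\circ i)}$ (tensors of invertible modules are invertible), so its inclusion is $\E_{n-1}$-monoidal and the factorization is through an $\E_{n-1}$-monoidal morphism, as required.

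The main obstacle is rigorously justifying the first step — the canonical lift of an $\E_{n-1}$-monoidal functor to the module category over the image of the unit. In the quasicategorical setting this requires carefully identifying $\bar{f}$ as a morphism of $\E_{n-1}$-operads and exploiting the universal property of $LMod_{M(f\circ i)}^\otimes$ as an $\E_{n-1}$-monoidal quasicategory over $\E_{n-1}^\otimes$; once this is in hand, the remainder of the argument reduces to the essentially set-theoretic observations about connectedness and invertibility of 1-simplices in the Kan complex $B$.
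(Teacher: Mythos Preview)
Your argument is correct and rests on the same underlying principle as the paper's: an $\E_n$-monoidal functor sends the unit to the unit, and therefore induces a map on modules over the unit. The difference is one of packaging. The paper observes directly that, since $B$ is a reduced connected $\E_n$-monoidal Kan complex, every object of $B$ is the unit $1_B$ and every morphism is a $1_B$-module automorphism, so $B\simeq LMod_{1_B}\simeq BGL_1(1_B)$ as $\E_{n-1}$-monoidal quasicategories (invoking Corollary~4.2.4.9 of \cite{ha}). The Kan extension, being monoidal, then automatically yields $BGL_1(1_B)\to BGL_1(M(f\circ i))$ in one stroke. Your two-step version---first lift to $LMod_{M(f\circ i)}$ via the unit constraint, then cut down to $BGL_1$ using connectedness and invertibility of 1-simplices---unpacks exactly this identification by hand. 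The paper's route is shorter and sidesteps the technical obstacle you flag (rigorously producing the lift to $LMod_{M(f\circ i)}^\otimes$), since the functoriality of $LMod_{(-)}$ under monoidal functors is precisely what the cited result from \cite{ha} supplies; your route is more explicit about why the factorization lands where it does.
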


\begin{proof}
Note that the left operadic Kan extension along $q$ takes the unique zero simplex of $B$ to $M(f\circ i)$ by Lemma \ref{kanextcomp}. Since $B$ is an $\E_n$-monoidal Kan complex it is also an $\E_n$-monoidal quasicategory with monoidal unit $1_B$ corresponding to the base point of $B$. Moreover all the morphisms of $B$ are also $1_B$-module isomorphisms. In other words, $LMod_{1_B}\simeq BGL_1(1_B)\simeq B$ as $\E_{n-1}$-monoidal quasicategories (also cf.~Corollary 4.2.4.9 of \cite{ha}). Hence it  must be that this Kan extension, being an $\E_n$-monoidal functor, induces an $\E_{n-1}$-monoidal functor $BGL_1(1_B)\simeq B\to BGL_1(M(f\circ i))$. 
\end{proof}

\begin{rem}
We can think of the identification $B\simeq BGL_1(1_B)$ as a construction of the delooping of $\Omega B$ by taking the base point component of $Pic(LMod_{\Omega B})$. In other words as a quasicategory $B$ can be thought of as the maximal $\E_{n-1}$-monoidal Kan complex on the object $\Omega B\in LMod_{\Omega B}$. 
\end{rem}

\section{Examples}\label{interexs}

A large number of morphisms of $\E_n$-monoidal Kan complexes  fit into the framework described in the introduction and Theorem \ref{mainthm}. In the following we repeatedly use the fact from 6.39 in \cite{boardvogt} that there is a sequence of infinite loop maps $U\to O\to GL_1(\sph)$ (where they write $F$ for $GL_1(\sph)$). The delooped (infinite loop) map $BO\to BGL_1(\sph)$ is called the $j$-homomorphism, and  the composition $BU\to BO\to BGL_1(\sph)$ is called the complex $j$-homomorphism.  We also use the fact that deloopings and connective covers (modeled by a bar construction and based loops on a bar construction respectively) take $\E_n$-spaces to $\E_{n-1}$-spaces and $\E_n$-spaces to $\E_n$-spaces, respectively. 

\begin{enumerate}
\item $\BSU\to \BU\to\mathbb{C}P^\infty$ is a fiber sequence of infinite loop spaces. The complex $j$-homomorphism $BU\to BGL_1(\sph)$ is a morphism of infinite loop spaces.
\item $\BString\to \BSpin\to K(\ints,4)$ is a fiber sequence of infinite loop spaces. Using the covering map $\BSpin\to BO$ composed with the $j$-homomorphism, we obtain a map of infinite loop spaces $\BSpin\to BGL_1(\sph)$.
\item $\BU\to \BSO\to \Spin$ is a fiber sequence of infinite loop spaces as a result of Table 2.1.1 of \cite{klw}, and the map $\BSO\to BGL_1(\sph)$ comes from the classical $j$-homomorphism, as above. 
\item That $BSp\to BU\to SO/U$ is a fiber sequence of infinite loop spaces also follows from \cite{klw}. 
\item $ \BString\to \BU[6,\infty)\to B^3\Spin$ is a fiber sequence of infinite loop spaces, again from \cite{klw}. The map $BU[6,\infty)\to BGL_1(\sph)$ is the obvious one. 
\item $\BSpin\to \BSO\to B(\SO/\Spin)$ is clearly a fiber sequence of infinite loop spaces, and the map $BSO\to BGL_1(\sph)$ is clear.
\item $\Omega \SU(n)\to \Omega \SU(n+1)\to \Omega S^{2n+1}$ is a fiber sequence of $\E_2$-spaces, as shown in diagram 9.1.2 of \cite{rav2}. Since, by Bott periodicity, $\Omega SU\simeq BU$, there is a morphism of $\E_2$-spaces $\Omega SU(n+1)\to \Omega SU\simeq BU\to BGL_1(\sph)$. 
\item $\BSO\to \BO\to \ints/2$ is the usual fiber sequence of infinite loop spaces giving the 1-connected cover. 
\item $\Omega^2 S^3[3,\infty)\to \Omega^2S^3\to S^1$ is a fiber sequence of $\E_2$-spaces, after \cite{mahringthom}, and the morphism $\Omega^2S^3\to BGL_1(\sph)$ is also the one given there.
\end{enumerate}

Thus from Corollaries \ref{quotiso} and \ref{thomiso} we obtain the following equivalences (with respect to the numbering given above):

\begin{enumerate}
\item $\MU\simeq \MSU\wedge_{\sph[S^1]}\sph$, and $\MU\wedge_{\MSU}\MU\simeq \MU\wedge \sph[\mathbb{C}P^\infty]$.
\item $\MSpin\simeq \MString\wedge_{K(\ints,3)}\sph$ and $\MSpin\wedge_{\MString}\MSpin\simeq \MSpin\wedge\sph[K(\ints,4)]$.
\item $\MSO\simeq\MU\wedge_{\SO/\U} \sph$ and $\MSO\wedge_{\MU}\MSO\simeq \MSO\wedge \sph[\Spin]$.
\item $\MU\simeq \MSp\wedge_{\sph[U/Sp]}\sph$ and $\MU\wedge_{\MSp}\MU\simeq MU\wedge\sph[SO/U]$.
\item $\MU[6,\infty)\simeq \MString\wedge_{BB\Spin}\sph$ and $\MU[6,\infty)\wedge_{\MString} MU[6,\infty)\simeq MU[6,\infty)\wedge\sph[B^3\Spin]$.
\item $ \MSO\simeq \MSpin\wedge_{\sph[\SO/\Spin]}\sph$ and $\MSO\wedge_{\MSpin}\MSO\simeq \MSO\wedge\sph[B(\SO/\Spin)]$.
\item $X(n+1)\simeq X(n)\wedge_{\Omega^2 SU(n)} \sph$ and $X(n+1)\wedge_{X(n)}X(n+1)\simeq X(n+1)\wedge\sph[\Omega S^{2n+1}]$.
\item $\MO\simeq \MSO\wedge_{\sph[\ints/2]}\sph$ and $\MO\wedge_{\MSO}\MO\simeq\MO\wedge\sph[\mathbb{R}P^\infty]$.
\item $H\ints/2\simeq H\ints\wedge_{\sph[\ints]}\sph$ and $H\ints/2\wedge_{H\ints}H\ints/2\simeq H\ints/2\wedge\sph[S^1]$.
\end{enumerate}

\begin{rem}\label{kunnsseq}
Some of the examples given above can be verified by computations using the spectral sequence found in Theorem 6.4 of \cite{ekmm}: $$Tor^{E_\ast(R)}_{p,q}(E_\ast(M),E_\ast(N))\Rightarrow E_{p+q}(M\wedge_RN).$$ For instance, for $E=H\ints$, we can relatively easily check that $$H_\ast(X(n+1)\wedge_{X(n)}X(n+1);\ints)\cong H_\ast(X(n+1);\ints)\otimes_{\ints} H_\ast(\Omega S^{2n+1};\ints).$$ Similar computations can be made for $\MU$ over $\MSU$ as well as for the fiber sequences appearing in Bott periodicity. Much of the relevant algebra for the latter has in fact already been determined in \cite{semcartbott}. It is the author's hope that the above equivalences will be of use to homotopy theorists doing the much harder computations related to various connective covers of $\BO$. 
\end{rem}

\begin{rem}
We remark that the relative Thom isomorphisms described above can be interpreted as torsor conditions for modules over spectral algebraic group schemes. In particular, if $X$ is a Kan complex (and thus a coalgebra by the diagonal map) then we may think of an equivalence $Mf\wedge_{M(f\circ i)} Mf\simeq Mf\wedge \sph[X]$ as giving $Spec(Mf)$ the structure of a  $Spec(\sph[X])$-torsor over $Spec(M(f\circ i))$. Indeed, in terminology familiar to noncommutative geometers, many of the above examples are \emph{Hopf-Galois extensions} in the sense of Rognes \cite{rog}. We delay an investigation of this structure to future work.  
\end{rem}

\subsection{A New Construction of $MU$}\label{MUconst}
The Lazard ring, which classifies formal group laws over discrete rings, is constructed iteratively by obstruction theory, one polynomial generator at a time (cf. \cite{laz}). The spectrum $\MU$, which classifies complex oriented ring spectra, is given in \cite{rav} as the colimit of the sequence of spectra $X(n)$ described in the previous section. Moreover, the spectra $X(n)$ are strongly related to rings used to construct the Lazard ring. This naturally leads to the question of whether or not the $X(n)$ spectra, and thus $\MU$, can also be constructed by some form of obstruction or deformation theory. Theorem \ref{mainthm} and its corollaries indicate that $X(n+1)$ is a ``torsor" over $X(n)$ for the coalgebra $\sph[\Omega S^{2n+1}]$. The stable splitting of $\Omega S^{2n+1}$ then further implies that $X(n+1)$ can be thought of a twisted polynomial extension of $X(n)$ (by a polynomial algebra with a single generator in degree $2n$). 

What we show in this section is that even more is true. By invoking Theorem 4.10 of \cite{acb}, we can deduce that $X(n+1)$ is in fact a so-called \emph{versal $\E_1$-$X(n)$-algebra of characteristic $\chi_n$}, where $\chi_n$ is a class in $\pi_{2n-1}(X(n))$. This terminology, introduced in \cite{szymikprimechar}, indicates that $X(n+1)$ can be thought of as a highly structured ($\E_1$, to be specific) homotopy quotient of $X(n)$ along $\chi_n$. It is never equivalent to the simpler process of ``coning off" that class. What is true, however, is that $X(n+1)$-module structure on a spectrum (where we are thinking of $X(n+1)$ as an $\E_1$-algebra) is equivalent to an $X(n)$-module structure on that spectrum and a null-homotopy of multiplication by $\chi_n$. Moreover, it is a result of the Nilpotence Theorem of \cite{devhopsmith} that each $\chi_n$ is nilpotent for all $n$. Recalling that $\pi_{2n-1}$ is the first homotopy degree of $X(n)$ which is not either polynomial or empty, we see then that our construction of $\MU$ is given by iteratively attaching $\E_1$-cells along nilpotent elements, which is exactly what one might expect to do if one wished to construct the universal nilpotence detecting ring spectrum (which $\MU$ is). 

\begin{defi}
Given $\alpha\in\pi_k(R)$ for $R$ an $\E_n$-ring spectrum, we define the versal $\E_{n-1}$-$R$-algebra of characteristic $\alpha$ to be the following pushout in $\E_{n-1}$-$R$-algebras:
$$\xymatrix{
Fr_{\E_{n-1}}(\Sigma^{k}R)\ar[d]^{adj(\alpha)}\ar[r]^-{adj(0)} & R\ar@{-->}[d]\\
R\ar@{-->}[r] & R//\alpha
}$$
where $Fr_{\E_{n-1}}$ is the free $\E_{n-1}$-algebra functor and the maps $adj(\alpha)$ and $adj(0)$ are the adjoints of the associated maps of $R$-modules $\alpha \colon\Sigma^kR\to R$ and $0\colon\Sigma^k R\to R$.
\end{defi}

\begin{cor}
Let $X(n)$ be the Thom spectrum associated to the morphism of $\E_2$-monoidal Kan complexes $\Omega \SU(n)\to \BU\to BGL_1(\sph)$. Then $X(n+1)$ is a versal $\E_1$-algebra over $X(n)$ of characteristic $\chi_n$ where $\chi_{n}$ is a canonical class in $\pi_{2n-1}(X(n))$. 
\end{cor}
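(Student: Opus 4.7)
The plan is to combine Theorem~\ref{mainthm} with the universal property of $\Omega S^{2n+1}$ as a free $\E_1$-space and then appeal to Theorem~4.10 of \cite{acb} to identify the resulting Thom spectrum with the versal construction $X(n)/\!/\chi_n$.

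First, applying Theorem~\ref{mainthm} to the fiber sequence of $\E_2$-spaces
\[
\Omega\SU(n)\to\Omega\SU(n+1)\to\Omega S^{2n+1}
\]
together with the $\E_2$-map $\Omega\SU(n+1)\to\BU\to BGL_1(\sph)$, I obtain an $\E_1$-map
\[
\varphi_n\colon\Omega S^{2n+1}\longrightarrow BGL_1(X(n))
\]
whose Thom spectrum is equivalent, as an $\E_1$-$X(n)$-algebra, to $X(n+1)$. This is exactly the statement of Theorem~\ref{mainthm} in the case $n=2$, $R=\sph$, applied to example (7) of Section~\ref{interexs}.

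Next, I would use the James splitting / free $\E_1$-algebra description of $\Omega S^{2n+1}$: as an $\E_1$-monoidal Kan complex, $\Omega S^{2n+1}$ is the free $\E_1$-space on the pointed space $S^{2n}$. Hence the $\E_1$-map $\varphi_n$ is determined up to contractible ambiguity by its underlying pointed map $S^{2n}\to BGL_1(X(n))$, which is an element of $\pi_{2n}(BGL_1(X(n)))\cong\pi_{2n-1}(GL_1(X(n)))$. For $n\geq 1$ this homotopy group sits canonically inside $\pi_{2n-1}(X(n))$ via the identification of higher homotopy groups of units, and I define $\chi_n\in\pi_{2n-1}(X(n))$ to be the resulting class. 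Canonicality is immediate: the fiber sequence, the map $\Omega\SU(n+1)\to\BU$, and the generator $S^{2n}\to\Omega S^{2n+1}$ are all canonical, so $\chi_n$ is canonical.

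Finally, Theorem~4.10 of \cite{acb} asserts precisely that for an $\E_1$-ring $R$ and a class $\alpha\in\pi_k(R)$, the Thom spectrum of the $\E_1$-map $\Omega S^{k+1}\to BGL_1(R)$ freely extending the classifying map $S^{k+1}\to BGL_1(R)$ of $\alpha$ is the versal $\E_1$-$R$-algebra $R/\!/\alpha$ of characteristic $\alpha$. Applying this with $R=X(n)$, $k=2n-1$, and $\alpha=\chi_n$ identifies the Thom spectrum of $\varphi_n$ with $X(n)/\!/\chi_n$, completing the proof in view of the first paragraph.

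The main obstacle, and the only point requiring care beyond bookkeeping, is the identification of $\varphi_n$ as the free $\E_1$-extension of the $S^{2n}$-family classifying $\chi_n$; this is really the assertion that Theorem~\ref{mainthm} produces the \emph{universal} $\E_1$-map out of $\Omega S^{2n+1}$ associated to the obstruction class, and not some further quotient of it. Concretely one must trace through the operadic Kan extension of Lemma~\ref{kanextcomp} to verify that, when $B=\Omega S^{2n+1}$ is a free $\E_1$-space on $S^{2n}$, the resulting $\E_1$-map into $BGL_1(M(f\circ i))$ is adjoint to a pointed map out of $S^{2n}$ that agrees with the obstruction to extending $f\circ i$ along $q$ — and that the class of this obstruction in $\pi_{2n-1}(X(n))$ is non-zero, hence genuinely characteristic in the sense of \cite{szymikprimechar}. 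Once this identification is in hand, the hypotheses of Theorem~4.10 of \cite{acb} are satisfied and the versal description of $X(n+1)$ follows.
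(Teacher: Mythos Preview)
Your argument is essentially the paper's own: apply Theorem~\ref{mainthm} to the fiber sequence to obtain an $\E_1$-map $\varphi_n\colon\Omega S^{2n+1}\to BGL_1(X(n))$ with Thom spectrum $X(n+1)$, use that $\Omega S^{2n+1}$ is the free $\E_1$-space on $S^{2n}$ to extract a class $\chi_n\in\pi_{2n-1}(GL_1(X(n)))\cong\pi_{2n-1}(X(n))$, and then invoke Theorem~4.10 of \cite{acb}. Two minor points: in your paraphrase of that theorem the loop space should be $\Omega S^{k+2}$ rather than $\Omega S^{k+1}$ (your application with $k=2n-1$ is nonetheless correct); and the worries in your final paragraph are unnecessary, since any $\E_1$-map out of a free $\E_1$-space is automatically the free extension of its restriction to the generator, so you simply \emph{define} $\chi_n$ to be whatever class $\varphi_n$ restricts to and no further identification or non-vanishing is required for the corollary as stated.
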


\begin{proof}
Given the fiber sequence $\Omega \SU(n)\to\Omega \SU(n+1)\to \Omega S^{2n+1}$, and an application of Theorem \ref{mainthm} above, we can identify $X(n+1)$ as the $\E_1$-monoidal Thom spectrum given by the $\E_1$-monoidal left Kan extension $\Omega S^{2n+1}\to BGL_1(X(n))$. By application of standard adjunctions, the map of $\E_1$-monoidal Kan complexes $\tilde{\tilde{\chi}}_n\in Map_{\E_1}(\Omega S^{2n+1},BGL_1(X(n)))$ induces a map of Kan complexes $\tilde{\chi}_n\in Map_{\T}(S^{2n-1},GL_1(X(n)))$. Note that $\tilde{\chi}_n$ must have image contained in a connected component $u\in\pi_0(GL_1(X(n))\simeq\ints/2$ which induces a translation $\tau_u\colon\Omega^\infty X(n)\to\Omega^\infty X(n)$. The composition $\tau_u\circ\tilde{\chi}_n\colon S^{2n-1}\to \Omega^\infty X(n)$ lifts to a morphism of spectra ${\chi}_n\colon\sph^{2n-1}\to X(n)$. An application of Theorem 4.10 of \cite{acb} gives that $X(n+1)$ is the versal $\E_1$-algebra of characteristic ${\chi}_n$ on $X(n)$. In other words, $X(n+1)$ in the following diagram is a pushout:

$$\xymatrix{
F_{\E_1}(\Sigma^{2n-1}X(n))\ar[r]^-{adj(0)}\ar[d]^-{adj({\chi}_n)}&X(n)\ar[d]\\
X(n)\ar[r]&X(n+1)
}$$
\end{proof}

\begin{rem}
The content of \cite{acb} allows us to consider $X(n+1)$ as the $\E_1$-spectrum obtained by attaching an $\E_1$-$X(n)$-cell to $X(n)$ along the map ${\chi}_n$ described above. Note that ${\chi}_1$, as a non-zero element of $\pi_1(\sph)$ must be equivalent to $\eta$, the Hopf element. The Hopf element is, of course, the first nilpotent element in the stable homotopy groups of spheres, and so again, it stands to reason that it would be the first element eliminated in an effort to construct the maximal nilpotence detecting ring spectrum. 
\end{rem} 

The following result is included since it follows immediately from \cite{acb}.

\begin{cor}
The $\E_1$-cotangent complex of the $\E_1$-algebra $X(n+1)$ in $X(n)$-modules is equivalent to $\Sigma^{2n}F_{\E_1}(X(n))\wedge_{X(n)}X(n+1).$
\end{cor}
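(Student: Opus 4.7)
The plan is to apply the cotangent complex computation for versal $\E_1$-algebras from \cite{acb} directly to the pushout presentation of $X(n+1)$ established in the preceding corollary.

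First, I would observe that the preceding corollary exhibits $X(n+1)$ as the pushout $X(n)\sqcup_{F_{\E_1}(\Sigma^{2n-1}X(n))}X(n)$ in $\E_1$-$X(n)$-algebras, with one leg given by the augmentation $adj(0)$ and the other by the attaching map $adj(\chi_n)$. The base-change property of the relative $\E_1$-cotangent complex for a pushout of $\E_1$-algebras then gives
$$L^{\E_1}_{X(n+1)/X(n)}\simeq L^{\E_1}_{X(n)/F_{\E_1}(\Sigma^{2n-1}X(n))}\wedge_{X(n)}X(n+1),$$
where the relative cotangent complex on the right is computed with respect to the augmentation $adj(0)$.

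Next, I would compute this right-hand cotangent complex. For the augmentation $F_{\E_1}(M)\to R$ of a free $\E_1$-algebra, the standard cofiber sequence of cotangent complexes together with the identification of the cotangent complex of a free $\E_1$-algebra as the generating module tensored with that algebra contributes an extra suspension, producing a suspension of the generating module smashed with an appropriate free construction over $R$. Applied with $M=\Sigma^{2n-1}X(n)$ and $R=X(n)$, this yields exactly $\Sigma^{2n}F_{\E_1}(X(n))$, and base-changing along the remaining leg $adj(\chi_n)$ gives the stated formula. The relevant identification is packaged in \cite{acb} precisely as the formula for the cotangent complex of a versal $\E_1$-algebra of a given characteristic, so invoking it with $\alpha=\chi_n$ of degree $2n-1$ produces the conclusion directly.

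The main obstacle is notational bookkeeping: one has to match the conventions of \cite{acb} for the $\E_1$-cotangent complex and for the free $\E_1$-algebra functor $F_{\E_1}$ with those implicit in the statement, and track the shift from $2n-1$ to $2n$ arising from the suspension in the cofiber sequence. Once these alignments are made, the conclusion is an immediate citation of the appropriate proposition from \cite{acb}, as the author indicates.
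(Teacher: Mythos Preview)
Your proposal is correct and takes essentially the same approach as the paper: the paper's entire proof is the single citation ``Cf.~Proposition 5.4 of \cite{acb},'' and you simply unpack what that citation says by invoking base-change for the $\E_1$-cotangent complex along the pushout square from the preceding corollary and the standard computation for the augmentation of a free $\E_1$-algebra. One minor wording slip: the base change $-\wedge_{X(n)}X(n+1)$ is along the map $B\to D$ in the pushout (the $adj(0)$ leg), not along $adj(\chi_n)$ as you wrote, but the displayed formula you gave is the correct one.
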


\begin{proof}
Cf.~Proposition 5.4 of \cite{acb}. 
\end{proof}

\printbibliography

\end{document}